\newtheorem{theorem}{Theorem}[section]
\newtheorem{proposition}[theorem]{Proposition}
\theoremstyle{definition}
\newtheorem{definition}[theorem]{Definition}
\theoremstyle{remark}
\newtheorem{remark}[theorem]{Remark}
\DeclareMathOperator{\supp}{supp}
\newcommand{\N}{\mathbb{N}}
\newcommand{\R}{\mathbb{R}}
\newcommand{\diff}{\mathrm d} 
\newcommand{\capa}{\mathrm{Cap}}
\newcommand{\BE}{\mathsf{BE}}
\newcommand{\RCD}{\mathsf{RCD}}
\newcommand{\Ric}{\mathrm{Ric}}
\newcommand{\Scal}{\mathcal{S}}
\newcommand{\Rcal}{\mathcal{R}}
\newcommand{\Dcal}{\mathcal{D}}
\newcommand{\Ucal}{\mathcal{U}}
\newcommand{\Acal}{\mathcal{A}}
\newcommand{\di}{\mathsf d} 
\newcommand{\Ascr}{\mathscr A}
\renewcommand{\epsilon}{\varepsilon}
\renewcommand{\phi}{\varphi}
\newcommand\pder[2][]{\ensuremath{\frac{\partial#1}{\partial#2}}}
\newcommand\ppder[2][]{\ensuremath{\frac{\partial^2#1}{\partial#2 ^2}}}
\newcommand{\x}{\mathbf x}
\newcommand{\y}{\mathbf y}
\newcommand{\cdc}{\Gamma}
\newcommand{\comma}{\ ,}
\newcommand{\fstop}{\ .}
\newcommand{\cquad}{\comma \quad}
\newcommand{\1}{\mathbf 1}
\newcommand\blfootnote[1]{
    \begingroup
    \renewcommand\thefootnote{}\footnote{#1}
    \addtocounter{footnote}{-1}
    \endgroup
}
\providecommand{\keywords}[1]
{
  \small	
  {\textit{Keywords---}} #1
}
\title{A phase transition in the Bakry--\'Emery gradient estimate for Dyson Brownian motion} 
\author{Kohei Suzuki\footnote{Department of Mathematical Science, Durham University. \\ \textit{E-mail}:  \href{mailto:kohei.suzuki@durham.ac.uk}{kohei.suzuki@durham.ac.uk}} \ and Kenshiro Tashiro\footnote{Okinawa Institute of Science and Technology (OIST). \\ \textit{E-mail}:  \href{mailto:kenshiro.tashiro@oist.jp}{kenshiro.tashiro@oist.jp}}}
\date{}
\begin{document}

\maketitle
 \blfootnote{\\ \keywords{Dyson Brownian motion, Ricci curvature, 
Bakry--\'Emery gradient estimate}}
\begin{abstract} 
\noindent In this paper, we find a gap between the lower bound of the Bakry--\'Emery $N$-Ricci tensor~$\Ric_N$ and the Bakry--\'Emery gradient estimate $\BE$ in the space associated with the finite-particle Dyson Brownian motion (DBM) with inverse temperature $0<\beta<1$.
Namely, we prove that, for the weighted space $(\R^n, w_\beta)$ with $w_\beta=\Pi_{i<j}^n |x_i-x_j|^\beta$ and any $N\in[n+\frac{\beta}{2}n(n-1),+\infty]$,  
\begin{itemize}
\item $\beta \ge 1 \implies \Ric_N\geq 0 \ \& \ \BE(0,N)$ hold;
\item $0<\beta<1$ $\implies$ $\Ric_N\geq 0$ holds $\text{while $\BE(0,N)$ does not hold}$,
\end{itemize}
which shows a phase transition of the Dyson Brownian motion regarding the Bakry--\'Emery curvature bound in the small inverse temperature regime. 
\end{abstract}
\section{Introduction}
In the seminal paper~\cite{BakEme85}, it was discovered that, for a complete weighted Riemannian manifold~$(M,g,e^{-V}{\rm vol}_g)$ with a potential~$V \in C^2(M)$, the following conditions are equivalent:
\begin{itemize}
    \item The Bakry--\'Emery $N$-Ricci curvature is bound below by a constant  $K \in \R$: 
\begin{align}\tag*{$\Ric_N \ge K$}\label{e:RLB}
\left[{\rm Ric}+{\rm Hess}(V)-\frac{dV\otimes dV}{N-n}\right]_\x(v, v) \ge Kg_\x(v, v)  
\end{align} 
\item The {\it $\BE(K,N)$ gradient estimate} holds: 
\begin{align}\label{BE}|\nabla T_tu|^2 + \frac{1-e^{-2Kt}}{NK}({\rm L}T_t u)^2\le e^{-2Kt}T_t|\nabla u|^2 \comma \tag*{$\BE(K,N)$} 
\end{align}
where $\{T_t\}_{t \ge 0}$ is the semigroup whose infinitesimal generator is ${\rm L}=\frac{1}{2}\Delta - \nabla V\cdot\nabla$ and $u\in H^{1,2}(M)$ is a function in the $(1,2)$-Sobolev space~on~$M$. 
By convention, the second term in the LHS is 
\[\frac{1-e^{-2Kt}}{NK}=\begin{cases}
    0 & N=+\infty,\\
    \frac{2t}{N} & K=0.
\end{cases}\]
\end{itemize}
Remarkably, the \ref{BE} does not require much regularity of spaces nor curvature tensors, which, therefore, opened a way to speak about the condition~``Ricci curvature $\ge K$ and the dimension $\le N$" in singular spaces without curvature tensors such as weighted Riemannian manifolds, infinite-dimensional spaces, and metric measure spaces, see e.g., \cite{BakGenLed14} for a comprehensive reference.

\smallskip
In this paper, we reveal a gap between \ref{e:RLB} and~\ref{BE} in a particular weighted manifold whose potential $V$ is singular, thus violates the condition $V \in C^2(M)$.
Let~$w_\beta(\mathbf x)=\prod_{i < j}|x_i-x_j|^\beta$ for $\mathbf x= (x_1,\ldots, x_n)$ and $\beta > 0$. Consider the weighted manifold $(\R^n, g,w_\beta)$ with the standard Euclidean metric~$g$, which is called the \emph{Dyson space} in this paper.
In the context of statistical physics and random matrices, the constant $\beta$ is called {\it inverse temperature}. The following theorem finds a gap between \ref{e:RLB} and \ref{BE} in the regime of small inverse temperature. 
\begin{theorem}\label{theorem:dyson} \ 
For the Dyson space $(\R^n,g,w_\beta)$ and $N\geq N_\beta:= n+\frac{\beta}{2}n(n-1)$,
\begin{itemize}
\item $\beta \ge 1 \implies \Ric_N\geq 0 \ \& \ \BE(0,N)$ hold;
\item $0<\beta<1$ $\implies$ $\Ric_N\geq 0$ holds while $\BE(K,\infty)$ does not for any $K \in \R$.
\end{itemize}
\end{theorem}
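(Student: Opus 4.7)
The plan is to tackle the theorem in three steps matching its three conclusions, with the delicate part being the failure of $\BE$ for $\beta<1$.

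\emph{Pointwise $\Ric_N\ge 0$ for all $\beta>0$.} On the open Weyl chamber $\mathcal W := \{\x \in \R^n : x_i \ne x_j \text{ for } i \ne j\}$ the potential $V=-\log w_\beta=-\beta\sum_{i<j}\log|x_i-x_j|$ is smooth, and a direct calculation yields the graph-Laplacian-type Hessian
\[
 (\hes V)_{ii}=\beta\!\sum_{j\ne i}\!\tfrac{1}{(x_i-x_j)^2},\qquad (\hes V)_{ij}=-\tfrac{\beta}{(x_i-x_j)^2}\ \ (i\ne j),
\]
so that $v^\top\hes V\,v=\beta\sum_{i<j}(v_i-v_j)^2/(x_i-x_j)^2$ while $dV(v)=-\beta\sum_{i<j}(v_i-v_j)/(x_i-x_j)$. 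Cauchy--Schwarz applied to the $\binom{n}{2}$ summands gives $(dV(v))^2\le \tfrac{\beta n(n-1)}{2}\,v^\top\hes V\,v$. Since $\Ric\equiv 0$ on flat $\R^n$, substituting into $\Ric_N=\Ric+\hes V-dV\otimes dV/(N-n)$ shows that $\Ric_N(v,v)\ge 0$ exactly when $N-n\ge \beta n(n-1)/2$, i.e.~$N\ge N_\beta$. This settles the first bullet in both regimes.

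\emph{$\BE(0,N)$ for $\beta\ge 1$.} The plan is to reduce to the smooth-manifold case via the classical non-collision property of DBM: for $\beta\ge 1$ the pairwise differences $X_i-X_j$ behave like Bessel processes of dimension $d=1+\beta\ge 2$ and almost surely never hit $0$, hence the walls $\{x_i=x_j\}$ are polar and have zero capacity for the Dirichlet form $\Ecal(u)=\tfrac12\int|\nabla u|^2 w_\beta\,d\x$. Therefore $C^\infty_c(\mathcal W)$ is dense in the form domain, and each Weyl chamber is a smooth weighted Riemannian manifold satisfying $\Ric_N\ge 0$ pointwise (by the previous step). The classical Bakry--\'Emery equivalence yields the pointwise $\Gamma_2$-criterion on $C^\infty_c(\mathcal W)$, and density lifts it to $\BE(0,N)$ on $\R^n$.

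\emph{Failure of $\BE(K,\infty)$ for every $K$ when $\beta<1$.} Here the walls carry positive capacity and the natural (Friedrichs-extended) semigroup mixes across them: $X_i-X_j$ behaves like Bessel of dimension $1+\beta<2$, which hits $0$ in finite time. I would first reduce to $n=2$ and to test functions of the form $u(\x)=\psi(x_1-x_2)$; the dynamics then factorises through the one-dimensional operator $L_z=\tfrac12\partial_z^2+\tfrac{\beta}{2z}\partial_z$ on $\R$ with reversible measure $|z|^\beta\,dz$. The goal is to exhibit a specific smooth $\psi$ whose quotient $|\nabla T_t\psi|^2/T_t|\nabla\psi|^2$ cannot be dominated by any $e^{-2Kt}$ at points approaching $z=0$ as $t\to 0^+$. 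This is the main obstacle, namely converting the qualitative ``walls are non-polar'' phenomenon into a violation of the gradient estimate that is uniform in $K$. My expected route is either (i) sharp short-time asymptotics of the Bessel-type heat kernel for $L_z$ (via modified Bessel functions) from which the blow-up rate of $|\nabla T_t\psi|$ near $z=0$ can be extracted, or (ii) an integrated Bochner computation isolating the boundary contribution at $\{z=0\}$ produced by integration by parts: this boundary term vanishes precisely for $\beta\ge 1$ and acquires a definite sign for $\beta<1$ that is incompatible with $\BE(K,\infty)$ for any $K\in\R$.
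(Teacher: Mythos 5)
Your first step (the pointwise computation of $\Ric_N$ via the Hessian of $V$ and Cauchy--Schwarz over the $\binom{n}{2}$ pairs) is exactly the paper's calculation in \eqref{eq:RicN} and is complete.

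Your second step (polarity of the walls for $\beta\ge 1$) is on the right track, but the inference ``the classical Bakry--\'Emery equivalence yields the pointwise $\Gamma_2$-criterion on $C_c^\infty(\mathcal W)$, and density lifts it to $\BE(0,N)$'' is glossed over in a way that hides the real issue. The classical passage $\Gamma_2 \Rightarrow \BE$ (see \cite[Cor.~3.3.19]{BakGenLed14}) requires $C_c^\infty(\mathcal W)$ to be a core for the \emph{generator}, i.e.\ essential self-adjointness, which is a $(2,2)$-capacity-type condition; all that the Bessel/polarity argument (or the paper's Proposition~\ref{prop:nullcapacity}) delivers is zero $(1,2)$-capacity of $\Scal$, which only identifies the \emph{form} domains $H^{1,2}(\R^n,w_\beta)=H^{1,2}(\R^n\setminus\Scal,w_\beta)$. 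The paper's proof deliberately avoids invoking ESA: it uses the form-domain identification to get the direct-sum decomposition $H^{1,2}(\R^n,w_\beta)=\bigoplus_{\sigma}H^{1,2}(X_\sigma,w_\beta)$, shows the semigroup factors accordingly (using conservativeness), and then verifies $\BE(0,N)$ chamber by chamber from the convexity of the potential on each $X_\sigma$. In fact clarifying that the $(1,2)$-capacity, rather than the $(2,2)$-capacity, is the right hypothesis is one of the stated contributions of the paper, so this distinction should not be elided.

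The genuine gap is in your third step, which you yourself flag as ``the main obstacle.'' You correctly identify the mechanism (Bessel dimension $1+\beta<2$, non-polar walls, the one-dimensional reduction to $L_z=\tfrac12\partial_z^2+\tfrac{\beta}{2z}\partial_z$), but you stop at a heuristic and propose two untried routes. This is the substance of the theorem and cannot be left as an expectation. The paper's resolution is to work with the \emph{weak Bochner inequality} \ref{wB} (equivalent to $\BE(K,\infty)$ by \cite[Cor.~2.3]{AmbGigSav15}) and to construct an explicit pair $(u_r,\phi_r)$ that violates it as $r\to 0$. The crucial choice is $u_r=f\cdot\eta_r$ where $f(\x)=\prod_{i<j}|x_i-x_j|^{-\beta}(x_i-x_j)$ is a \emph{formal harmonic} function for the weighted Laplacian (in your $n=2$ picture it is $|z|^{1-\beta}\mathrm{sgn}(z)$, which is $L_z$-harmonic) cut off near a single wall $\{x_1=x_2\}$ and away from the others. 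For $\beta<1$ this $u_r$ lies in $H^{1,2}$ and in $\Dcal(\Delta)$, with $\Delta u_r\equiv 0$ on the inner region $D_r^1$, but $|\nabla u_r|^2\sim |x_1-x_2|^{-2\beta}$ blows up; the companion $\phi_r$ is an oscillatory bump $\Phi_r(t)\Psi(h)$ (with $\Phi_r(t)=1+\cos(\tfrac{3\pi}{2r}t)$) whose weighted Laplacian is made to oppose the singularity of $|\nabla u_r|^2$. A change of variable shows the LHS of \ref{wB} scales like $-r^{-1-\beta}$ (hence $\to-\infty$), while the RHS scales like $r^{1-\beta}$ (hence $\to 0$), uniformly killing $\BE(K,\infty)$ for every $K$. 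Without an explicit construction of this flavour, your step (3) does not constitute a proof; the subtle point your heuristic misses is precisely that the ``boundary term'' must be packaged as a divergence of the weak Bochner LHS that survives against an arbitrary $K$-shift on the RHS, and that requires the quantitative $|x_1-x_2|^{-2\beta}$ vs.\ $|x_1-x_2|^{\beta}$ scaling bookkeeping carried out in the paper.
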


\paragraph{Idea of the proof.} 
The Ricci tensor bound $\Ric_N \ge 0$ can be immediately seen by a straightforward computation of the Bakry--\'Emery $N$-Ricci tensor for every $0<\beta<\infty$ and $N\geq N_\beta$:
\begin{equation}\label{eq:RicN}
    \begin{aligned}
        \Ric_N(v, v)_\x={}&\beta\sum_{i<j}\frac{(v_i-v_j)^2}{(x_i-x_j)^2}-\frac{\beta^2}{N-n}\left(\sum_{i<j}\frac{v_i-v_j}{x_i-x_j}\right)^2\\
        \geq {}&\left(\beta-\frac{\beta^2}{N-n}\cdot \frac{n(n-1)}{2}\right)\sum_{i<j}\frac{(v_i-v_j)^2}{(x_i-x_j)^2}\geq 0
    \end{aligned}
\end{equation}
for $v=(v_1,v_2,\dots,v_n)\in T_\x \R^n$.
This $N_\beta$ is sharp since the equality holds when $x_i=v_i$ for every $i=1,2,\dots,n$.
 Let $\Scal:=\bigcup_{i<j}\{x_i=x_j\}$ be the diagonal set, which is the singular points (zeros of the weight~$w_\beta$),
and $\Rcal:=\R^n\setminus \Scal$ the set of regular points.
We note that the $\Ric_\infty\geq 0$ holds on $\Rcal$,
since the singular potential $\log w_\beta=\beta\sum_{i<j}\log|x_i-x_j|$ is {\it locally} convex on $\Rcal$.

The difference in \ref{BE} between $\beta\in (0,1)$ and $\beta\in[1,\infty)$ occurs in their Sobolev spaces.
When $\beta\in[1,\infty)$,
then the $(1,2)$-capacity of $\Scal$ is zero (see Proposition \ref{prop:nullcapacity}), and we have the coincidence $H^{1,2}(\Rcal,w_\beta)=H^{1,2}(\R^n,w_\beta)$. In this case, the Sobolev space splits into the direct sum $\bigoplus_{\sigma \in \mathfrak S_n}H^{1,2}(X_\sigma, w_\beta)$, where $\sigma$ specifies each section $X_\sigma$ separated by the diagonal $\Scal$. Hence, the associated $L^2$-heat semigroup also splits as an iterative tensor product of the heat semigroup acting on each section $H^{1,2}(X_\sigma, w_\beta)$.  As each section supports \ref{BE} with $K=0$, this extends to the whole Sobolev space. In contrast, when $\beta\in (0,1)$,
$\Scal$ has a positive $(1,2)$-capacity. In this case, we can construct functions disproving the $(K,\infty)$-weak Bochner inequality \ref{wB}, which is equivalent to disproving~$\BE(K,\infty)$. 
More precisely, we can construct families of functions $u_r$ and $\phi_r$ such that, when $r$ approaches to $0$, the LHS of \ref{wB} goes to $-\infty$, while the RHS converges to $0$. The function $u_r$ will be constructed by cutting a \emph{formal} harmonic function off. When $\beta\in (0,1)$, the function $u_r$ is not locally Lipschitz and 
 $|\nabla u_r|^2$ does not lie in the domain of the Laplacian, which causes a gap between \ref{e:RLB} and \ref{BE}.
\paragraph{The statistical phisical viewpoint}
The Dyson space arises from what is called {\it Dyson Brownian motion}, which is the eigenvalue distributions of a Hermitian matrix valued Brownian motions introduced in \cite{Dys62} when~$\beta=2$. The Bakry--\'Emery curvature bound has played a significant role, e.g., to study local equilibrium in~\cite{ErdSchYau11}. From the statistical physical viewpoint, Theorem~\ref{theorem:dyson} states that the gap between $\Ric_{N} \ge 0$ and $\BE(0,N)$ appears if and only if the solution to the corresponding stochastic differential equation (called {\it Dyson SDE})
    $$\diff X_t^{i}=\frac{\beta}{2}\sum_{j: i \neq j}^n \frac{\diff t}{X_t^i-X_t^j}+\diff B_t^i \qquad i \in \{1,\ldots, n\}$$
    has collisions among particles within finite time, where $B_t^1, \ldots, B_t^n$ are independent Brownian motions in $\R$. See, e.g., \cite{CepaLep97, CepaLep01, AndGuiZei09} for the
    collision of the Dyson SDE. By applying It\^o's formula, the infinitesimal generator of the Dyson SDE is identical to the (weighted) Laplacian in the Dyson space~$(\R^n, w_\beta)$. Our result shows a phase transition regarding the Bakry--\'Emery gradient estiamte for the transition semigroup of the Dyson SDE in the small inverse temperature regime.

\paragraph{$(1,2)$-capacity vs $(2,2)$-capacity}
The two conditions~\ref{e:RLB} and \ref{BE} are bridged by what is called $\Gamma_2$-condition.
For a weighted Riemannian manifold $(M,g,e^{-V}{\rm vol}_g)$,
let $\Acal_0\subset L^2(M,e^{-V}{\rm vol}_g)$ be a dense subset.
We say that $(M,g,e^{-V}{\rm vol}_g)$ satisfies $\Gamma_2(K,N)$ if for every $u \in \Acal_0$, it holds
\begin{align}\tag*{$\Gamma_2(K,N)$} \label{G2}
\cdc_2(u) \ge K\cdc(u) +\frac{1}{N}({\rm L}u)^2 \comma
\end{align}
 where $\cdc(u,v):=\langle \nabla u, \nabla v\rangle$ is the square gradient operator and $\cdc_2(u, v):=\frac{1}{2}({\rm L} \cdc(u, v)- \cdc({\rm L} u, v) - \cdc(u, {\rm L} v))$ is {\it the $\cdc_2$-operator} with the infinitesimal generator~${\rm L}=\Delta- \nabla V\cdot\nabla$.
It is a well-known sufficient condition that, if the space $\Acal_0$ is dense in the domain of the infinitesimal generator (called {\it the essential self-adjointness} (ESA)), then \ref{BE} holds in the framework of spaces endowed with Markov triplet, see \cite[Cor.~3.3.19]{BakGenLed14}. 
Recall that the curvature involves the second-order differential structure and, having $\mathcal A_0=C_c^\infty(\mathcal R)$, the ESA imposes the negligibility of the singular set $\Scal$ in terms of the $(2,2)$-capacity, which is the second-order differential object as well. From this viewpoint,  it is not surprising that the ESA serves a sufficient condition for the equivalence~\ref{e:RLB} $\iff$ \ref{G2} $\iff$ \ref{BE}, see \cite[Thm.\ 1.1]{Wan11} and \cite[Cor.\ 2.3]{AmbGigSav15}.  A question that has not been fully understood is whether the negligibility in terms of the $(2,2)$-capacity is necessary. 
The contribution of this paper in this context is to provide the necessary and sufficient condition for this equivalence in the Dyson space, by which we clarify that the negligibility of the singular set in terms of {\it the $(1,2)$-capacity} (the first-order differential structure) is essential rather than the $(2,2)$-capacity.

\paragraph{Relation to $\RCD$ theory}
For every $\beta>0$,  the Dyson space does not satisfy the $\RCD(K,\infty)$ condition for any $K \in \R$, which can be easily seen by the lack of the Bruun--Minkowski inequality, see~\cite[Thm.\ 30.7]{Vil09} for the precise definition of the Bruun--Minkowski 
 inequality. 
The idea of the disproof is as follows: take two balls $A_0,A_1\subset \Rcal$ so that its intermediate subset $A_{1/2}=\{\frac{a_0}{2}+\frac{a_1}{2}\mid a_i\in A_i\}$ is centred in $\Scal$. Then this pair fails the Brunn--Minkowski inequality by letting their radii sufficiently small. 
This is not surprising because the Hamiltonian $V(\mathbf x)=-\beta \sum_{i<j}^n\log|x_i-x_j|$ is not convex in the whole $\R^n$, which is convex only on each section $X_\sigma$.
 We will not use this fact to prove/disprove \ref{BE}.



In \cite{HonSun25}, they provided a characterisation for almost smooth spaces to be $\RCD(K,N)$ spaces in terms of weighted Ricci tensor lower bounds on its regular part. 
Here an almost smooth space is a metric measure space that consists of a regular part equipped with a weighted Riemannian structure, and a singular part being {\it a set of zero $(1,2)$-capacity}. Our result confirms that the $(1,2)$-capacity condition is necessary, even to obtain $\BE$, which is  weaker than $\RCD$. 

\paragraph{Comparison with the unlabeled Dyson Brownian motion in the configuration space} Finally, we remark that in \cite{Suz23}, the first author proved $\BE(0,\infty)$ for 
the Dyson Brownian motion with {\it every} $\beta >0$ in the {\it configuration space}.  Indeed,  $\RCD(0, N_\beta)$ holds with $N_\beta=n+\frac{\beta}{2}n(n-1)$ thanks to \eqref{eq:RicN} and the geodesically convexity of the interior set.
This does not contradict Theorem~\ref{theorem:dyson} because the configuration space is the quotient space~$(\R^n/\mathfrak{S}_n,g,w)$  with respect to the symmetric group $\mathfrak S_n$, which regards all sections separated by $\Scal$ as a single space. As long as we look at a single section, the potential stays convex, by which $\BE(0, N_\beta)$ as well as $\RCD(0,N_\beta)$ remain true for all $\beta >0$. See Remark~\ref{rmk:boundary} for further technical details. 
This shows a difference between the {\it labelled} Dyson Brownian motion in~$\R^n$ and the {\it unlabelled} one in the configuration space from the viewpoint of the Bakry--\'Emery gradient estimate  when~$0<\beta<1$.

\section*{Acknowledgement}
The first author greatly appreciates the Theoretical Sciences Visiting Program (TSVP) at the Okinawa Institute of Science and
Technology for supporting his stay. He thanks Qing Liu and Xiaodan Zhou for their hospitality during his stay at OIST.
The authors appreciate Kazuhiro Kuwae for his comments on tamed spaces. They also thank Shouhei Honda, Shin-ichi Ohta and Nikita Evseev for fruitful discussions.

\section*{Data Availability Statement.}
No datasets were generated or analysed during the current study.

\section{\texorpdfstring{$\BE(K,N)$}{BE} holds when \texorpdfstring{$\beta\geq 1$}{bge1}}
\subsection{Capacity estimate} 
\begin{definition}[Weighted Sobolev space] Let $X \subset \R^n$ be an open subset. For $f \in C^\infty(X)$, define
\begin{align*}
\|u\|_{H^{1,2}(X)}^2:= \|u\|_{L^2(X)}^2 + \|\nabla u\|_{L^2(X)}^2 \fstop
\end{align*}
We define the {\it $(1,2)$-Sobolev space} $H^{1,2}(X, w_\beta)$ as the completion of $C^\infty(X) \cap \{f: \|f\|_{H^{1,2}(X)}<\infty\}$ with respect to the norm~$\|\cdot\|_{H^{1,2}(X)}$. When $X=\R^n$, we simply write $\|\cdot\|_{H^{1,2}}$. 
\end{definition} 
For a compact set $K\subset \R^n$, define 
    \[\Ascr(K):=\{g\in H^{1,2}(\R^n,w_\beta)\mid g\geq 1\text{ on a neighbourhood of }K\} \fstop \]
\begin{definition}[Capacity]
   The $(1,2)$-capacity$~\capa_\beta$ is defined as follows:
\begin{itemize} 
   \item For a compact set $K \subset \R^n$, 
    \[\capa_\beta(K)=\inf_{g\in \Ascr(K)}\|g\|_{H^{1,2}}\comma\]
    where $\inf \emptyset =+\infty$ conventionally. 
   \item  For an arbitrary subset $E\subset \R^n$, 
    \[\capa_\beta(E)=\sup_{K\subset E}\capa_\beta(K) \fstop \]
    \end{itemize}
 \end{definition}
We prove that the $(1,2)$-capacity of the singular set $\Scal$ is zero when $\beta\ge1$. For $i\neq j\in\{1,2,\dots,n\}$, we write the \emph{oriented} hyperplane $\Scal_{ij}=\{x_i=x_j\}$.
 \begin{proposition}\label{prop:nullcapacity}
     $\capa_\beta(\Scal)=0$ if $\beta\geq 1$.
 \end{proposition}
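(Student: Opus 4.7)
The plan is to produce, for every compact $K \subset \Scal$ and every $\eta > 0$, a function in $\Ascr(K)$ of $H^{1,2}$-norm smaller than $\eta$, by means of a logarithmic cutoff concentrated around a single diagonal hyperplane. Since finite sums and pointwise truncation preserve $H^{1,2}$, capacity is finitely subadditive on compact sets; because any compact $K \subset \Scal$ decomposes as $\bigcup_{i<j}(K \cap \Scal_{ij})$, a finite union, it suffices to prove $\capa_\beta(K)=0$ for a compact $K \subset \Scal_{ij}$ with $i<j$ arbitrary but fixed.

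Fix such $K$ and choose $\chi \in C_c^\infty(\R^n)$ with $\chi \equiv 1$ on an open neighborhood $U$ of $K$. For $0 < \epsilon < R$ define the one-dimensional cutoff
\[
\phi_{\epsilon,R}(t) := \max\!\Big\{0,\ \min\!\big\{1,\ \tfrac{\log R - \log|t|}{\log R - \log \epsilon}\big\}\Big\},
\]
which equals $1$ on $[-\epsilon,\epsilon]$, interpolates logarithmically on $\{\epsilon<|t|<R\}$, and vanishes for $|t|\ge R$. Set $g_{\epsilon,R}(\mathbf x) := \chi(\mathbf x)\,\phi_{\epsilon,R}(x_i - x_j)$. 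This function is Lipschitz with compact support and is equal to $1$ on the open neighborhood $U \cap \{|x_i - x_j|<\epsilon\}$ of $K$, so it lies in $\Ascr(K)$ provided it belongs to $H^{1,2}(\R^n, w_\beta)$, which follows from a standard mollification since $w_\beta$ is locally bounded.

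For the norm, each factor $|x_{i'} - x_{j'}|$ is bounded by $\diam(\supp \chi)$, so $w_\beta \le M$ on $\supp\chi$ for a constant $M$, and writing $w_\beta = |x_i - x_j|^\beta \tilde w_\beta$ also gives $\tilde w_\beta \le M'$ there. Hence $\|g_{\epsilon,R}\|_{L^2}^2 \le M\cdot|\supp\chi \cap \{|x_i-x_j|<R\}| = O(R)$, and the $(\nabla \chi)\phi_{\epsilon,R}$ part of the gradient norm is $O(R)$ as well. The main contribution $\chi\nabla\phi_{\epsilon,R}$ satisfies, using $|\nabla\phi_{\epsilon,R}|^2 = 2(\log(R/\epsilon))^{-2}(x_i-x_j)^{-2}$ on $\{\epsilon<|x_i-x_j|<R\}$ and Fubini in the $t = x_i - x_j$ direction,
\[
\|\chi\,\nabla \phi_{\epsilon,R}\|_{L^2(w_\beta)}^2 \;\le\; \frac{C}{(\log(R/\epsilon))^2}\int_\epsilon^R t^{\beta-2}\,dt,
\]
which is $C/\log(R/\epsilon)$ for $\beta=1$ and $O\big(R^{\beta-1}/(\log(R/\epsilon))^2\big)$ for $\beta>1$. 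Taking $R=R(\epsilon)\to 0$ with $R(\epsilon)/\epsilon\to\infty$ (for instance $R(\epsilon)=1/\log(1/\epsilon)$) sends every term to $0$, so $\|g_{\epsilon,R}\|_{H^{1,2}}\to 0$ and $\capa_\beta(K)=0$.

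The step I expect to be most delicate is the uniform control of $\tilde w_\beta$ on $\supp\chi$ near multi-diagonal loci such as $\{x_i=x_j=x_k\}$, together with keeping the Fubini cross-sections under control when $K$ meets these loci. However, because $\beta>0$ makes each transverse factor vanish at the other diagonals rather than blow up, $\tilde w_\beta$ is trivially bounded on any compact set, so no further argument is needed. The same computation also makes transparent the sharpness of the hypothesis $\beta \geq 1$: for $\beta<1$ the integral $\int_\epsilon^R t^{\beta-2}\,dt \asymp \epsilon^{\beta-1}$ diverges as $\epsilon\downarrow 0$ for any fixed $R$, consistent with the paper's claim that $\Scal$ carries strictly positive $(1,2)$-capacity in that regime.
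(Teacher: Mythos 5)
Your proposal is correct and follows essentially the same strategy as the paper: a logarithmic cutoff in the transverse variable $t=x_i-x_j$ near each diagonal hyperplane, truncated by a bump function, with the weighted Dirichlet energy reduced via Fubini to the one–dimensional integral $\int_\epsilon^R t^{\beta-2}\,dt$, which is exactly where $\beta\ge 1$ enters. The only (immaterial) differences are that you use a single-logarithm profile with two coupled radii $R(\epsilon)\to 0$, $R(\epsilon)/\epsilon\to\infty$ and treat the hyperplanes one at a time via subadditivity, whereas the paper uses an iterated-logarithm profile in the distance $\di_\Scal$ to the whole singular set with outer radius $s^{1/e}$.
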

 \begin{proof}
The idea of the proof is as follows: For $s>0$,
 construct a family of Lipschitz functions $g_s$ such that $g_s\equiv 1$ around $\Scal$; cut them off by an appropriate function $\xi$; prove $\|g_s\cdot \xi\|_{H^{1,2}}\to 0$ as $s\to 0$. We now give the proof.
 \medskip
 
{\it Step 1. Parametrisation of $\R^n$.}
 Define the function $\di_\Scal:\R^n\to \R$ as
\[\di_\Scal(\x):=\di(\x,\Scal)=\min\{\di(\x,\y)\mid \y\in \Scal\}\fstop\]
 Then $\di_\Scal$ is a $1$-Lipschitz function.
For an ordered pair $(i,j)\in\{1,2,\dots,n\}^2$,
we say that a point $\x\in\Rcal$ belongs to $\Ucal_{ij}$ if there are  unique $h\in\Scal_{ij}$ and $t>0$ such that
\[\x=h+\frac{t}{\sqrt{2}}(\mathbf{e}_i-\mathbf{e}_j),~\text{ and }~\di_\Scal(\x)=t \comma\]
where $\mathbf{e}_k$ $(k=1,2,\dots,n)$ is the canonical unit vector.
Then $\Ucal:=\bigsqcup_{i\neq j}\Ucal_{ij}$ is a set of full measure in $\Rcal$.
On each $\Ucal_{ij}$, we can endow the local coordinates $(t,h)$ so that $t=\di_\Scal(\x)=\di(\x,\Scal_{ij})$. For a small number $s\in(0,1)$,
define the Lipschitz continuous function $g_s:\R^n\to\R$ as
\begin{align} \label{e:CO}
g_s(\x)=\begin{cases}
    1 & \di_\Scal(\x)\in(0,s),\\
    1+\log|\log(\di_\Scal(\x))|-\log|\log(s)| & \di_{\Scal}(\x)\in[s,s^{1/e}],\\
    0 & \di_\Scal(\x)\in(s^{1/e},\infty).
\end{cases}
\end{align}
Note that, on each $\Ucal_{ij}$, the function $g_s$ depends only on the variable $t=\di_\Scal(\x)$.

\medskip 
{\it Step 2. Cutting $g_s$ off.} For $r>0$,
    we denote by $rB^n\subset \R^n$  a closed ball of radius $r$ (with an unspecified centre).
     We will show that $\capa_\beta(\Scal\cap \frac13 B^n)=0$ for $\frac13 B^n$ with every centre.
For concentric balls $\frac13 B^n$ and $\frac12 B^n$,
let $\xi:\R^n\to \R$ be a cut-off function such that
\begin{equation}\label{eq:xi}
    \frac13 B^n\subset \supp(\xi)\subset \frac12 B^n,~~\xi\equiv 1~~\text{ on }\frac13 B^n,~~|\xi|,|\nabla\xi|\leq 1.
\end{equation}
Then $g_s\cdot \xi$ is a compactly supported Lipschitz function.
Furthermore,
$g_s\cdot \xi\equiv 1$ around a neighbourhood of $\Scal \cap \frac13 B^n$,
so $g_s\cdot \xi\in \Ascr(\Scal \cap \frac13 B^n)$.

 \medskip    
{\it Step 3. Computation of $\|g_s\cdot \xi\|_{H^{1,2}}$.}
By the definition, the support of $g_s\cdot \xi$ is contained in the product space as
\[\supp(g_s\cdot \xi)\cap \Ucal_{ij}\subset [0,s^{1/e}]\times \frac{1}{2}B^{n-1}_{ij},\]
where the centre of the ball $\frac12B^{n-1}_{ij}$ is the projection of the centre of $B^n$ on $\Scal_{ij}$.
Therefore we can estimate as
\begin{align}
    &\int_{\R^n}\left[|g_s\cdot \xi|^2+|\nabla (g_s\cdot \xi)|^2\right]w_\beta dx^{\otimes n}\notag\\
    =&\sum_{i,j}\int_{\Ucal_{ij}}\left[|g_s\cdot \xi|^2+|\nabla (g_s\cdot \xi)|^2\right]\prod_{k<l}|x_k-x_l|^\beta dtdh\notag\\
    \leq & \sum_{i,j}\int_0^{s^{1/e}}\int_{\frac12 B^{n-1}_{ij}}\left[|g_s\cdot \xi|^2+|\nabla (g_s\cdot \xi)|^2\right]\prod_{k<l}|x_k-x_l|^\beta  dtdh\label{eq:1-1}
    \end{align}
For every~$\x=(x_1,x_2,\dots,x_n)\in \frac12 B^n$ and $i,j\in\{1,2,\dots,n\}$,
we have $|x_i-x_j|\leq 1$.
Furthermore,
for $\x=(t,h)\in \Ucal_{ij}\cap\supp(g_s)$,
we have $|x_i-x_j|=\sqrt{2}t$.
Combined with~\eqref{eq:xi},
we can continue the estimate as
\begin{align}\label{eq:1-2}
    \eqref{eq:1-1}\leq& \sum_{i,j}\int_0^{s^{1/e}}\int_{\frac12 B^{n-1}_{ij}}\left[2|g_s|^2+|\nabla g_s|^2+2|g_s|\cdot|\nabla g_s|\right]|\sqrt{2}t|^\beta dtdh.
    \end{align}
    Since $g_s$ does not depend on $h$,
    we have
    \begin{align*}
    \eqref{eq:1-2}=& \sum_{i,j}\int_0^{s^{1/e}}\int_{\frac12 B^{n-1}_{ij}}\left[2|g_s|^2+\left|\pder[g_s]{t}\right|^2+2|g_s|\cdot\left|\pder[g_s]{t}\right|\right]|\sqrt{2}t|^\beta dtdh\\
    \leq 
    & \mathrm{vol}_{n-1}\left(\frac12 B^{n-1}\right)\cdot n(n-1)\cdot 2^{\beta/2}\int_0^{s^{1/e}}\left[2|g_s|^2+\left|\pder[g_s]{t}\right|^2+2|g_s|\cdot\left|\pder[g_s]{t}\right|\right]|t| dt,
\end{align*}
where ${\rm vol}_{n-1}$ denotes the (unweighted) Lebesgue measure in $\R^{n-1}$ and the assumption $\beta\geq 1$ is used for $|t|^\beta \le |t|$ for $0\le t \le 1$. 
The last integration can be computed explicitly with \eqref{e:CO},
and it converges to $0$ as $s\to 0$.
Therefore, $\capa_\beta(\Scal\cap\frac13 B^n)=0$ for $\frac13 B^n$ with every centre.
In view of the sub-additivity of the capacity, we conclude
$\capa_\beta(\Scal)=0$.
\end{proof}

\subsection{The proof of \texorpdfstring{$\BE(K,N)$}{BE} when \texorpdfstring{$\beta\geq 1$}{bge1}}
For an open set~$X \subset \R^n$,  the {\it $L^2$-semigroup}~$(T^X_t)_{t \ge 0}$ is defined as the propagator of the $L^2$-gradient flow, viz., $T^X_tf$ is the solution to
\begin{align*}
\partial_t u = - \nabla_{L^2}\mathcal E_X(u) \qquad u_0=f \in H^{1,2}(X, w_\beta) \comma
\end{align*}
where $\nabla_{L^2}$ is the Fr\'echet derivative in $L^2(X, w_\beta)$ and $\mathcal E_X$ is the Dirichlet energy given by
$$\mathcal E_X(f):=\frac{1}{2}\int_{X}|\nabla f|^2 w_\beta dx^{\otimes n} \fstop$$ 
Due to, e.g., \cite[Thm.~1.3.3]{Dav89}, the semigroup $T_t^{X}$ can be extended to $L^p(X, w_\beta)$ for $1 \le p \le \infty$. We simply write $T_t$ when $X=\R^n$.
\begin{theorem}
    For $\beta\geq 1$,
    the Dyson space $(\R^n,g,w_\beta)$ satisfies $\BE(0,N)$ for every $N \in [n+\frac{\beta n(n-1)}{2}, \infty]$. 
\end{theorem}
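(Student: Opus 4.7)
My plan is to reduce $\BE(0,N)$ on $(\R^n, g, w_\beta)$ to the analogous estimate on each Weyl chamber $X_\sigma := \{x_{\sigma(1)} < \cdots < x_{\sigma(n)}\}$ ($\sigma \in \mathfrak{S}_n$), using the capacity result of Proposition~\ref{prop:nullcapacity} to split the Dirichlet structure and the smoothness and convexity of $V_\sigma := -\log w_\beta$ on each chamber to run the standard Bakry--\'Emery $\cdc$-calculus.

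First I would prove the orthogonal decomposition
$$H^{1,2}(\R^n, w_\beta) = \bigoplus_{\sigma \in \mathfrak{S}_n} H^{1,2}(X_\sigma, w_\beta), \qquad \mathcal{E}(u) = \sum_\sigma \mathcal{E}_{X_\sigma}(u|_{X_\sigma}).$$
Since $\capa_\beta(\Scal) = 0$, every element of $H^{1,2}(\R^n, w_\beta)$ is determined up to a capacity-null set by its restriction to $\Rcal = \bigsqcup_\sigma X_\sigma$, and conversely any tuple $(v_\sigma)_\sigma \in \bigoplus_\sigma H^{1,2}(X_\sigma, w_\beta)$ glues to an element of $H^{1,2}(\R^n, w_\beta)$ by approximation with smooth bump functions that vanish on a $\capa_\beta$-null neighbourhood of $\Scal$ (concretely, one multiplies by the cut-off functions $1 - g_s$ constructed in \eqref{e:CO} and lets $s \to 0$). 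Uniqueness of the $L^2$-gradient flow then forces the global semigroup to act block-diagonally: $T_t u(\mathbf{x}) = T_t^{X_\sigma}(u|_{X_\sigma})(\mathbf{x})$ for $\mathbf{x} \in X_\sigma$.

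Next I would establish $\BE(0,N)$ on a single chamber $X_\sigma$, viewed as a smooth weighted Riemannian manifold with singular boundary $\partial X_\sigma \subset \Scal$. Each $X_\sigma$ is Euclidean-convex, and on it the potential $V_\sigma = -\beta \sum_{i<j}\log|x_i-x_j|$ is smooth and convex, with the pointwise bound $\Ric_N \geq 0$ coming directly from \eqref{eq:RicN}. The crucial analytic input is the essential self-adjointness of the weighted Laplacian on $C_c^\infty(X_\sigma)$, which follows from the blow-up $V_\sigma \to +\infty$ at $\partial X_\sigma$: the boundary lies at infinite weighted distance, so the chamber behaves as a complete weighted manifold without boundary (the probabilistic counterpart, due to C\'epa--Lepingle \cite{CepaLep97}, is that the Dyson SDE does not collide when $\beta \geq 1$). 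Given ESA, the pointwise bound $\cdc_2(u) \geq \frac{1}{N}(Lu)^2$ on $C_c^\infty(X_\sigma)$ — exactly the restatement of $\Ric_N \geq 0$ via \eqref{eq:RicN} — upgrades to the gradient estimate $\BE(0,N)$ on $X_\sigma$ by the standard argument of \cite[Cor.~3.3.19]{BakGenLed14}.

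Finally, the global estimate follows by locality: for $\mathbf{x} \in \Rcal$ lying in the unique chamber $X_\sigma$, the block-diagonal structure of $T_t$ ensures that every term of $\BE(0,N)$ at $\mathbf{x}$ depends only on $u|_{X_\sigma}$, so $\BE(0,N)$ at $\mathbf{x}$ reduces to the chamber-wise inequality just established; since $\Scal$ has Lebesgue measure zero, this yields $\BE(0,N)$ almost everywhere on $\R^n$. The main obstacle is making the ESA assertion in Step~2 rigorous. I would handle it via a cut-off approximation near $\partial X_\sigma$, exploiting convexity of $V_\sigma$ together with the integral vanishing $\int |\nabla \chi|^2 w_\beta \to 0$ along a suitable family of boundary cut-offs $\chi$ (essentially the content of Proposition~\ref{prop:nullcapacity}) to absorb the boundary contributions in the integration-by-parts identity defining the self-adjoint extension.
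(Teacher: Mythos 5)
Your overall architecture --- splitting $H^{1,2}(\R^n,w_\beta)$ into the chambers $X_\sigma$ via Proposition~\ref{prop:nullcapacity}, proving the estimate chamber by chamber, and recombining by locality and conservativeness --- is the same as the paper's. The genuine gap is in your Step~2, where you make essential self-adjointness (ESA) of the weighted Laplacian on $C_c^\infty(X_\sigma)$ the ``crucial analytic input''. That ESA claim is false on part of the range you need, namely for $1\le\beta<3$. Near a codimension-one face $\{x_i=x_j\}$ of the chamber, the transverse model operator is the Bessel-type generator $\partial_t^2+\tfrac{\beta}{t}\partial_t$ on $L^2((0,\infty),t^\beta\,dt)$; the ground-state transform turns it into $-\partial_t^2+\tfrac{\beta(\beta-2)}{4}t^{-2}$ on $L^2((0,\infty),dt)$, which is essentially self-adjoint on $C_c^\infty(0,\infty)$ only when $\tfrac{\beta(\beta-2)}{4}\ge\tfrac34$, i.e.\ $\beta\ge3$; this failure persists for the chamber itself. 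Your heuristic for ESA is also incorrect: the metric on the Dyson space is the unweighted Euclidean one, so $\partial X_\sigma$ sits at finite distance, and the blow-up of $V_\sigma$ only makes the boundary inaccessible in the first-order sense (zero $(1,2)$-capacity, non-collision of the SDE). ESA is governed by second-order ($(2,2)$-capacity) negligibility, which is a strictly stronger requirement --- precisely the distinction the introduction of the paper emphasises.

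The chamber-wise conclusion is nevertheless true, and the paper reaches it without ESA: $X_\sigma$ is a convex domain on which $V_\sigma$ is convex with $\Ric_N\ge0$ by \eqref{eq:RicN}, so $(X_\sigma,g,w_\beta)$ is a log-concave (indeed $\RCD(0,N)$) space and its Neumann semigroup satisfies \eqref{i:BES} by the synthetic theory, or by approximating $V_\sigma$ with smooth convex potentials on slightly shrunken chambers. If you wish to keep a $\cdc_2$-calculus argument, you must replace ESA by a weaker input (e.g.\ Markov uniqueness plus a self-improvement argument); as written, your Step~2 does not go through for $1\le\beta<3$.
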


\begin{proof}
    From the definition of $\Scal$, $\R^n\setminus \Scal$ has a decomposition
    \[\R^n\setminus \Scal=\bigsqcup_{\sigma\in \mathfrak{S}_n} X_\sigma,\]
    where elements in the $n$-symmetric group $\mathfrak{S}_n$ corresponds to the signature of $x_i-x_j$.
In view of~\cite[Thm.\ 2.44]{HeiKilMar93} and Proposition~\ref{prop:nullcapacity}, the two Sobolev spaces are identical: $H^{1,2}(\R^n,w_\beta)=H^{1,2}(\R^n \setminus \Scal,w_\beta)$. Thus, the following decomposition holds:
    \begin{equation}\label{eq:sobolevequal}
        H^{1,2}(\R^n,w_\beta)=\bigoplus_{\sigma\in \mathfrak{S}_n}H^{1,2}(X_\sigma,w_\beta).
    \end{equation}
Similarly, we have the tensorisation of the semigroup 
\begin{align} \label{eq:decmoposesemigroup}
T_t=\bigotimes_{s \in \mathfrak S_n}T_t^{X_\sigma} \fstop
\end{align}
Due to the volume test~\cite[Theorem~4]{Stu94}, $T_t^{X_\sigma}$ is conservative, i.e., $T_t^{X_\sigma}\1=\1$ for every $\sigma \in \mathfrak S_n$. In particular, 
\begin{align} \label{eq:decmoposesemigroup2}
T_t u_\sigma = T_t^{X_\sigma}u_\sigma \prod_{\sigma' \neq \sigma}T_t^{X_{\sigma'}}\1 = T_t^{X_\sigma}u_\sigma \cquad u_\sigma \in H^{1,2}(X_\sigma, w_\beta)\fstop
\end{align}
Rewriting $\prod_{i<j}^n |x_i-x_j|^\beta=e^{-H_n}$, where $H_n(\mathbf x)=-\beta\sum_{i<j}\log|x_i-x_j|$, it can be easily seen that $H_n$ is convex in each component $X_\sigma$. Thus, $T_t^{X_\sigma}$ satisfies $\BE(0,N)$:  for $\sigma \in \mathfrak S_n$ and $ u \in H^{1,2}(X_\sigma, w_\beta)$,
    \begin{align} \label{i:BES}
    |\nabla T_t^{X_\sigma}u_\sigma|^2 +\frac{2t}{N}\left[(\Delta - \nabla V\cdot\nabla) T_t^{X_\sigma} u_\sigma\right]^2\le T_t^{X_\sigma}|\nabla u_\sigma|^2 \fstop
    \end{align}
    Having \eqref{eq:sobolevequal}--\eqref{i:BES}, 
    \begin{align*}
    &|\nabla T_t u|^2+\frac{2t}{N}\left[(\Delta - \nabla V\cdot\nabla) T_t u\right]^2 \\
    &= \Bigl|\sum_{\sigma \in \mathfrak S_n}\nabla T^{X_\sigma}_t u_\sigma\Bigr|^2+\frac{2t}{N}\left[\sum_{\sigma\in \mathfrak S_n}(\Delta - \nabla V\cdot\nabla) T_t^{X_\sigma} u_\sigma\right]^2\\
    &= \sum_{\sigma \in \mathfrak S_n}\Bigl|\nabla T^{X_\sigma}_t u_\sigma\Bigr|^2+\frac{2t}{N}\left[(\Delta - \nabla V\cdot\nabla) T_t^{X_\sigma} u_\sigma\right]^2\\
    &\le \sum_{\sigma \in \mathfrak S_n} T^{X_\sigma}_t\bigl| \nabla u_\sigma\bigr|^2 = T_t|\nabla u|^2
    \end{align*}
    for $u=\sum_{\sigma \in \mathfrak S_n} u_\sigma \in \oplus_{\sigma \in \mathfrak S_n}H^{1,2}(X_\sigma, w_\beta)$, where the second and the fourth equalities follow by~
    \[\quad |\nabla T_t^{X_\sigma}u_\sigma||\nabla T_t^{X_{\sigma'}}u_{\sigma'}|=0 \cquad |\nabla u_\sigma||\nabla u_{\sigma'}|=0 \cquad \sigma \neq \sigma'\fstop \qedhere\]
\end{proof}
\begin{remark}\label{rmk:sobtolip}
    Although the Dyson space $(\R^n,g,w_\beta)$ satisfies $\BE(0,N)$, it does not support the $\RCD(0,\infty)$ condition because the Sobolev-to-Lipschitz property does not hold. Namely, there is a function $f\in H^{1,2}(\R^n, w_\beta)$ with $|\nabla f|\leq 1$ a.e.~without a Lipschitz continuous representative.
    Indeed, take $F=\sum_{\sigma\in \mathfrak{S}_n}c_\sigma1_{X_\sigma}$ where $c_{\sigma}\neq c_{\sigma'}$ for $\sigma \neq \sigma'$. 
    After cutting $F$ off by a nice function,
    we can construct a compactly supported function $f \in \oplus_{\sigma \in \mathfrak S_n} H^{1,2}(X_\sigma, w_\beta)$  such that $|\nabla f| \le 1$ a.e.. However, $f$ cannot have a Lipschitz representative, as it has a discontinuity on each $S_{ij}$.
    A gap between $\BE$ and $\RCD$ was observed in \cite{Hon18} with a different example.
\end{remark}

\section{\texorpdfstring{$\BE(K,\infty)$}{BE} fails for \texorpdfstring{$\beta\in(0,1)$}{ble1}}

The goal of this section is to prove the following theorem.
\begin{theorem}\label{thm:noBE}
    For $\beta\in(0,1)$,
    the Dyson space $(\R^n,g,w_\beta)$ does not satisfy $\BE(K,\infty)$ for all $K\in\R$.
\end{theorem}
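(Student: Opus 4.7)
The plan is to exploit the standard equivalence between $\BE(K,\infty)$ and the weak Bochner inequality
\begin{equation*}
\tfrac{1}{2}\int L\phi\,\Gamma(u)\,d\m - \int \phi\,\Gamma(u, Lu)\,d\m \ \ge\ K\int \phi\,\Gamma(u)\,d\m
\end{equation*}
(for admissible test functions $u,\phi$), and to disprove the latter by constructing a one-parameter family $(u_r)_{r>0}$ together with a fixed test function $\phi$ such that the LHS diverges to $-\infty$ while the RHS converges to $0$ as $r\to 0$. Such a sequence rules out $\BE(K,\infty)$ for every $K\in\R$.

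First I would localize to a tubular neighborhood of a generic smooth point $h_0\in\Scal_{ij}$ chosen bounded away from every other hyperplane $\Scal_{kl}$ with $\{k,l\}\neq\{i,j\}$. In the adapted coordinates $(t,h)$ of Section 2, the weight factorizes as $w_\beta(t,h) = (\sqrt{2}\,t)^\beta\rho(h)$ with $\rho$ smooth and bounded away from zero near $h_0$, so the non-$(i,j)$ part of the potential $V = -\log w_\beta$ is a lower-order smooth perturbation. A short ODE computation then identifies the principal part of $L$ acting on $t$-radial functions as $L_t = \tfrac{1}{2}\partial_t^2 + \tfrac{\beta}{t}\partial_t$, whose formal harmonic is
\begin{equation*}
\tilde u(t) = t^{1-2\beta}\quad(\beta \neq 1/2),\qquad \tilde u(t) = \log t\quad(\beta = 1/2);
\end{equation*}
crucially $|\tilde u'(t)|\sim t^{-2\beta}$ is unbounded at $t=0$, so $\tilde u$ is never locally Lipschitz.

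Next I would set $u_r(\x) := r^\alpha\,\chi(t/r)\,\psi(h)\,\tilde u(t)$, where $\chi:\R\to[0,1]$ is a fixed smooth cutoff vanishing on $(-\infty,1]$ and equal to $1$ on $[2,\infty)$, $\psi$ is a fixed smooth tangential bump at $h_0$, and $\alpha$ is a scaling exponent to be chosen; let $\phi$ be a fixed nonnegative smooth bump with $\phi\equiv 1$ on $\supp(u_r)$, chosen tangentially-flat across $\Scal_{ij}$ so that $L\phi\in L^\infty$. Because $L_t\tilde u = 0$, the quantity $Lu_r$ reduces to commutator terms of the shape $\tfrac{1}{2}\chi_r''\tilde u + \chi_r'\tilde u' + \tfrac{\beta}{t}\chi_r'\tilde u$ supported in the thin annulus $\{r\le t\le 2r\}$. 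Rescaling via $s=t/r$ and performing the explicit weighted integration against $t^\beta\,dt$, one obtains power-law asymptotics for $\int\phi\,\Gamma(u_r)\,d\m$ and $\int\phi\,\Gamma(u_r,Lu_r)\,d\m$ whose exponents in $r$ differ by exactly $2$, while $\int L\phi\cdot\Gamma(u_r)\,d\m$ is dominated by the first. A simple linear inequality in $\alpha$ then shows that the two scalings can be simultaneously tuned---by choosing $\alpha$ in an interval of length $1$---so that the first integral tends to $0$ and the second diverges to $+\infty$, forcing the LHS of the weak Bochner inequality to $-\infty$ and the RHS to $0$, as desired.

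The main obstacle I anticipate is the careful bookkeeping of lower-order and cross-term contributions, together with a verification that the equivalence between $\BE(K,\infty)$ and the weak Bochner inequality applies in this singular setting. Although the compact support of $u_r$ away from $\Scal$ guarantees $u_r\in D(L)$ and $Lu_r\in H^{1,2}$ automatically, so that $u_r$ is admissible for any reasonable formulation of weak Bochner, the non-radial corrections arising from $\psi(h)$, $\rho(h)$, and the smooth remainder of the potential must be shown to be subleading relative to the radial singular scaling in $r$; this requires uniform-in-$r$ estimates on the integrals against $t^\beta\,dt$ on $[1, r_0/r]$ after the substitution $s=t/r$. Conceptually, the persistence of a divergent boundary contribution on $\{t=r\}$ in the integration-by-parts identity for $\Gamma_2$ reflects exactly the positive $(1,2)$-capacity of $\Scal$ in the regime $\beta<1$, and is the direct reason why, unlike in the $\beta\ge 1$ case, $|\nabla u_r|^2$ fails to lie in the domain of $L$.
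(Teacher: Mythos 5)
Your proposal shares the paper's high-level strategy (disprove the weak Bochner inequality by sending the LHS to $-\infty$ while the RHS stays bounded), but the construction as written cannot work, for two fundamental reasons.

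First, the cutoff is applied in the wrong direction. You set $u_r = r^\alpha\chi(t/r)\psi(h)\tilde u(t)$ with $\chi$ vanishing on $(-\infty,1]$, so $u_r\equiv 0$ for $|t|\le r$; your $u_r$ is therefore smooth, compactly supported, and bounded away from $\Scal$. But on the regular set $\Rcal$ the potential $V=-\log w_\beta$ is $C^\infty$ with $\mathrm{Hess}\,V\ge 0$ (this is exactly the computation behind $\Ric_N\ge 0$ in the introduction), so the pointwise inequality $\Gamma_2(u)\ge 0$ holds classically for any such $u$. Integrating by parts cleanly (no boundary terms, since $u_r$ is smooth and compactly supported in $\Rcal$) gives $\tfrac12\int L\phi\,\Gamma(u_r)\,\diff\m-\int\phi\,\Gamma(u_r,Lu_r)\,\diff\m=\int\phi\,\Gamma_2(u_r)\,\diff\m\ge 0$: the weak Bochner inequality is automatic for such a test function, and no choice of $\alpha$ can change this. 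The paper avoids this by applying the cutoff $\eta_r$ in the opposite sense, namely $\eta_r\equiv 1$ on a full neighbourhood $D^1_r$ of $\Scal_{12}$; the resulting $u_r=f\cdot\eta_r$ retains the genuine singularity $|\nabla u_r|^2\sim|t|^{-2\beta}$ at $\Scal$, which is exactly what the proof exploits.

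Second, the sign of your driving term is backwards. With $\phi$ a fixed bump that is $\equiv 1$ on (a neighbourhood of) $\supp u_r$, one has $L\phi\equiv 0$ there, hence $\int L\phi\,\Gamma(u_r)\,\diff\m=0$, while
\begin{equation*}
-\int\phi\,\Gamma(u_r,Lu_r)\,\diff\m=-\int\Gamma(u_r,Lu_r)\,\diff\m=\|Lu_r\|^2_{L^2(w_\beta)}\ \ge\ 0\comma
\end{equation*}
so the LHS of the weak Bochner inequality is nonnegative regardless of $\alpha$ or $r$. It cannot diverge to $-\infty$. Producing a sign is precisely why the paper takes the other term $\tfrac12\int\Delta\phi_r\,|\nabla u_r|^2\,\diff\m$ as the driver and uses an $r$-dependent, oscillatory test function $\phi_r$ (with $\Delta\phi_r$ of amplitude $\sim r^{-2}$ alternating in sign on scale $r$) paired with the singular profile $|\nabla u_r|^2\sim|t|^{-2\beta}$; and it engineers $\Delta u_r\equiv 0$ on $\supp\phi_r$ so that $\int\phi_r\,\Gamma(u_r,\Delta u_r)\,\diff\m$ vanishes exactly. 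A fixed $\phi$ independent of $r$ cannot substitute for this.

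There is also a smaller error: your formal harmonic should be $t^{1-\beta}$, not $t^{1-2\beta}$. In the $(t,h)$ coordinates with $t=x_1-x_2$ the radial part of the generator $\Delta+\nabla\log w_\beta\cdot\nabla$ is proportional to $\partial_t^2+\tfrac{\beta}{t}\partial_t$ (your $L_t$ carries an inconsistent factor of $\tfrac12$ on the second-order term only), and solving $u''+\tfrac{\beta}{t}u'=0$ gives $u=C\,t^{1-\beta}$. This matches the paper's $f(\x)=\prod_{i<j}|x_i-x_j|^{-\beta}(x_i-x_j)\sim\mathrm{sgn}(t)\,|t|^{1-\beta}$. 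With $t^{1-2\beta}$ you would even have $|\nabla u_r|^2\sim t^{-4\beta}\notin L^1(w_\beta)$ as soon as $\beta\ge\tfrac13$, so $u_r$ would leave $H^{1,2}(\R^n,w_\beta)$ altogether once the cutoff is placed on the correct side.
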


\begin{proof}
Thanks to~the equivalence between $\BE(K,\infty)$ and the $(K,\infty)$-weak Bochner inequality (see \cite[Cor.\ 2.3]{AmbGigSav15}), it suffices to disprove the following inequality for every $K \in \R$:  
    \begin{equation}\label{wB}
        \frac12\int_{\R^n}|\nabla u|^2\Delta \phi dx^{\otimes n}\geq \int_{\R^n}\left[\langle\nabla\Delta u,\nabla u\rangle + K|\nabla u|^2\right]\phi dx^{\otimes n}.\tag*{{\rm wB}$(K,\infty)$}
    \end{equation}  
    for every $u\in \Dcal(\Delta)$ with $\Delta u\in H^{1,2}(\R^n,w_\beta)$,
    and for every $\phi\in \Dcal(\Delta)$ with $\phi\geq 0$, $\phi,\Delta \phi\in L^\infty(\R^n,w_\beta)$.
    Here a function $f\in H^{1,2}(\R^n,w_\beta)$ is in the domain $\Dcal(\Delta)$ if there exists $h\in L^2(\R^n,w_\beta)$ satisfying
    \[\int_{\R^n}\langle \nabla f,\nabla g\rangle w_\beta dx^{\otimes n}=-\int_{\R^n}hgw_\beta dx^{\otimes n} \cquad g\in H^{1,2}(\R^n,w_\beta) \fstop\]
    Such a unique $h$ is denoted by $\Delta f$.
    
   The idea of the proof is that for $r\in(0,1)$,
we will construct families of functions $u_r$ and $\phi_r$ satisfying the following property:
for every~$K\in\R$,
there exists $\delta>0$ such that
if $r<\delta$,
then the pair $(u_r, \phi_r)$ fails \ref{wB}.

\medskip
{\it Step 1. Domains of $u_r$ and $\phi_r$.}
Let $I_{12}=\{(i,j)\in\N^2\mid 1\leq i<j\leq n,~(i,j)\neq (1,2)\}$ be the set of indices,
and let $S^1,S^2\subset \{x_1=x_2\}$ be bounded subsets of the singular hyperplane given by
\[S^m=\left\{(x_1,x_2,\dots,x_n)\middle| \begin{array}{l}
    x_1=x_2\in[ -m, m],\\
    x_i\in\left[5i-m,5i+m\right]\text{ for }i=3,4\dots,n
\end{array}\right\}.\]
    For $r\in(0,1)$ and $m=1,2$, let $D_r^m\subset\R^n$ be the compact set given by
    \[D_r^m=\{h+t\left(1,-1,0,\dots,0\right)\in\R^n\mid h\in S^m,~~t\in[-r,r]\}.\]
    Thanks to the assumption $r<1$,
    the set $D_r^m$ intersects with the singular hyperplane $\{x_i=x_j\}$ if and only if $(i,j)=(1,2)$.
     Indeed, for $\x\in D_r^m$,
it holds
    \[
    \begin{cases}
        |x_i-x_j|\geq 5j-m-(5i+m)=5(j-i)-2m\geq 1 & 3\leq i<j,\\
        |x_i-x_j|\geq 5j-m-(m+r)=5j-2m-r\geq 10 & i=1,2,~j\geq 3.
    \end{cases}
    \]
    We frequently use the parametrisation $(t,h)$ on $D_r^m$ (and on $\R^n$ by the natural extension).
    Note that the equality $t=2(x_1-x_2)$ holds in this parametrisation, in particular, 
\begin{equation}\label{eq:tisx1-x2}
        \pder[]{t}=\pder[]{x_1}-\pder[]{x_2}.
    \end{equation}

\medskip    
{\it Step 2. Construct a function $u_r$.}
    For a given $r\in(0,1)$,
    define functions $f,\eta_r:\R^n\to \R$ by
    \begin{equation}\label{eq:harmonic}
    f_r(\x)=\prod_{i<j}|x_i-x_j|^{-\beta}(x_i-x_j),\end{equation}
    and
    \begin{equation}\label{eq:eta}
        \eta_r(\x)=\eta_r(t,h)=P_r(t)Q(h),
    \end{equation}
    where $P_r:\R\to \R$ and $Q:\{x_1=x_2\}\to \R$ be smooth functions such that
    \[P_r(t)=\begin{cases}
        1 & t\in[-r,r]\\
        0 & t\notin [-2r,2r]
    \end{cases},~~
    Q_r(h)=\begin{cases}
        1 & h\in S^1\\
        0 & h\notin S^2
    \end{cases}.\]
    Then $\eta_r$ is a smooth cut-off function i.e.
    \[\eta_r(\x)=\begin{cases}
        1 & \x\in D_r^1,\\
        0 & \x\notin D_{2r}^{2}.
    \end{cases}\]
    Define the function $u_r=f\cdot \eta_r$, which is a differentiable function such that its support intersects with the hyperplane $\{x_i=x_j\}$ if and only if $(i,j)=(1,2)$.
    

\medskip
 {\it Step 3. Show $u_r\in H^{1,2}(\R^n, w_\beta)$.}
 We only need to check the behaviour around the singular hyperplane $\{x_1=x_2\}$ since the support of $u_r$ is contained in $D_{2r}^2$, which does not intersects with the other hyperplanes $\{x_i=x_j\}$\footnote{We can verify $u_r\in W^{1,2}(\R^n,w_\beta)\implies u_r\in H^{1,2}(\R^n,w_\beta)$ from the well known fact that the weight $|x_1-x_2|^\beta dx^{\otimes n}$ belongs to Muckenhoupt's $\Acal_2$ class for $\beta\in (0,1)$.}.
    The gradient of $u_r$ can be  computed as,
   for $\x\in \R^n\setminus\{x_1=x_2\}$,
    \begin{equation}\label{eq:nablausquare-1}
    \begin{aligned}
        |\nabla u_r|^2(\x)=&\sum_{k=1}^n\Bigg[\sum_{p\neq k}\frac{1-\beta}{x_k-x_p}\prod_{i<j}|x_i-x_j|^{-\beta}(x_i-x_j)\eta_r\\
        &\qquad+\prod_{i<j}|x_i-x_j|^{-\beta}(x_i-x_j)\pder[\eta_r]{x_k}\Bigg]^2\\
        =&2(1-\beta)^2\eta_r^2|x_1-x_2|^{-2\beta}\\
        &\qquad+\theta_1(\x)|x_1-x_2|^{-2\beta}(x_1-x_2)+\theta_2(\x)|x_1-x_2|^{2-2\beta},
    \end{aligned}
    \end{equation}
    where $\theta_1,\theta_2$ are smooth functions supported on $D_{2r}^2$.
    Since $\beta\in(0,1)$, and $\eta_r,\theta_1,\theta_2$ are compactly supported smooth functions, we have
   \begin{align*}
        &\int_{\R^n}|\nabla u_r|^2(\x)w_\beta(\x)dx^{\otimes n}\\
        =&\int_{\R^n}\Big[2(1-\beta)^2\eta
    _r^2|x_1-x_2|^{-\beta}\\
    &\qquad+\theta_1(\x)|x_1-x_2|^{-\beta}(x_1-x_2)+\theta_2(\x)|x_1-x_2|^{2-\beta}\Big]\prod_{(i,j)\in I_{12}}|x_i-x_j|^\beta dx^{\otimes n}<+\infty.
   \end{align*}
As $u_r \in L^2(\R^n, w_\beta)$ as well, we conclude $u_r\in H^{1,2}(\R^n, w_\beta)$.

\medskip
{\it Step 4. $u_r\in \Dcal(\Delta)$.}
For a test function $\psi\in C^\infty(\R^n) \cap H^{1,2}(\R^n, w_\beta)$,
we have
\begin{align}
    &\int_{\R^n}\langle \nabla u_r,\nabla \psi\rangle w_\beta(\x) dx^{\otimes n}\notag\\
    =&\int_{\R^n}\left[\sum_{k=1}^n\left(\pder[f_r]{x_k}\eta_r+f_r\pder[\eta_r]{x_k}\right)\pder[\psi]{x_k}\right]\prod_{i<j}|x_i-x_j|^\beta dx^{\otimes n}\notag\\
    =&\int_{\R^n}\sum_{k=1}^n\left[\sum_{p\neq k}\frac{1-\beta}{x_k-x_p}\prod_{i<j}(x_i-x_j)\eta_r+\prod_{i<j}(x_i-x_j)\pder[\eta_r]{x_k}\right]\pder[\psi]{x_k}dx^{\otimes n}.\label{eq:2-1}
    \end{align}
    By using the integration by parts in terms of the unweighted Lebesgue measure and the fact that $\eta_r$ is compactly supported,
    \begin{equation}\label{eq:integrationbyparts}
    \begin{aligned}
\eqref{eq:2-1}=&-\int_{\R^n}\sum_{k=1}^n\Bigg[\sum_{q\neq p,k}\sum_{p\neq k}\frac{1-\beta}{(x_k-x_p)(x_k-x_q)}\eta_r\\
    &\qquad\qquad\qquad +\sum_{p\neq k}\frac{2-\beta}{x_k-x_p}\pder[\eta_r]{x_k}+\ppder[\eta_r]{x_k}\Bigg]\prod_{i<j}(x_i-x_j)\psi dx^{\otimes n}.
\end{aligned}
\end{equation}
Note that, by a straightforward computation, we have the identity
\begin{equation*}\label{eq:cancel}
    \sum_{q\neq p,k}\sum_{p\neq k}\frac{1-\beta}{(x_k-x_p)(x_k-x_q)}\prod_{i<j}(x_i-x_j)\equiv 0.
\end{equation*}
Hence, $\Delta u_r \in L^2(\R^n, w_\beta)$ exists and it is explicitly written by
\begin{equation}\label{eq:lapu-1}
    \Delta u_r = \sum_{k=1}^n\left[\sum_{p\neq k}\frac{2-\beta}{x_k-x_p}\pder[\eta_r]{x_k}+\ppder[\eta_r]{x_k}\right]\prod_{i<j}|x_i-x_j|^{-\beta}(x_i-x_j).
\end{equation}
Since $\eta_r\equiv 1$ on $D_r^1$,
this formula implies that $\Delta u_r\equiv 0$ on $D_r^1$.

\medskip
{\it Step 5. Show $\Delta u_r \in H^{1,2}(\R^n, w_\beta)$.}
As in Step 3,  we only need to discuss the order of the term $|x_1-x_2|$ in \eqref{eq:lapu-1} when $|x_1-x_2|$ approaches $0$.
Here we use the properties \eqref{eq:tisx1-x2} and \eqref{eq:eta}.
By extracting the $|x_1-x_2|$-terms in \eqref{eq:lapu-1},
\begin{align*}
    \Delta u_r &= (2-\beta)\left(\pder[\eta_r]{x_1}-\pder[\eta_r]{x_2}\right)|x_1-x_2|^{-\beta}+O(|x_1-x_2|^{1-\beta})\\
    &=(2-\beta)\pder[P_r]{t}(t)Q(h)|x_1-x_2|^{-\beta}+O(|x_1-x_2|^{1-\beta}).
\end{align*}
Since $P_r(t)\equiv 1$ around $t=0$ i.e. $x_1-x_2=0$,
the leading term vanishes around $t=0$,
and we obtain
\[\Delta u_r=O(|x_1-x_2|^{1-\beta}).\]
In the same way, we have
\[|\nabla\Delta u_r|^2=O(|x_1-x_2|^{-2\beta}),\]
therefore $\Delta u_r \in H^{1,2}(\R^n, w_\beta)$.

\medskip
{\it Step 6. Construct a function $\phi_r$.}
We now construct a function $\phi_r:\R^n\ni (t,h)\mapsto \phi_r(t,h) \in\R$ with its support contained in $D_{r}^1$.
Let $\Phi_r:\R\to\R$ be the $C^{1,1}$ function given by
\[
   \Phi_r(t)=\begin{cases}
    1+\cos(\frac{3\pi}{2r}t) & |t|\leq \frac{2r}{3},\\
    0 & |t|\geq \frac{2r}{3}.
\end{cases}\] 
Since $\Phi_r$ does not depends on $x_3,x_4,\dots,x_n$, we have
\begin{equation}\label{eq:zeroderivative-1}
    \pder[\Phi_r]{x_3}=\pder[\Phi_r]{x_4}=\cdots = \pder[\Phi_r]{x_n}=0.
\end{equation}
We now take $\Psi:\{x_1=x_2\}\to \R$ to be a positive smooth function such that
\[
    \supp(\Psi)\subset S^1.
\]
Since $\Psi$ does not depends on $t=x_1-x_2$,
we have
\begin{equation}\label{eq:zeroderivative-2}
    \pder[\Psi]{t}=\pder[\Psi]{x_1}-\pder[\Psi]{x_2}=0.
\end{equation}
We now define the function $\phi_r:\R^n\to \R $ as
\[\phi_r(t,h)=\Phi_r(t)\Psi(h).\]
Then $\phi_r$ is a $C^{1,1}$ function such that $\supp(\phi_r)\in D_r^1$.
By construction,  
we can easily check 
$\phi_r\geq 0$ and $\phi_r\in L^\infty(\R^n,w_\beta)$.

\medskip
{\it Step 7. Show $\phi_r\in\Dcal(\Delta)$ and $\Delta\phi_r\in L^\infty(\R^n,w_\beta)$.}
For a test function $\xi\in C^\infty(\R^n) \cap H^{1,2}(\R^n, w_\beta)$,
we have
\begin{align*}
&\int_{\R^n}\langle \nabla \phi_r,\nabla \xi\rangle w_\beta(\x) dx^{\otimes n}\\
    =&\int_{\R^n}\left[\sum_{k=1}^n\left(\pder[\Phi_r]{x_k}\Psi+\Phi_r\pder[\Psi]{x_k}\right)\pder[\xi]{x_k}\right]\prod_{i<j}|x_i-x_j|^\beta dx^{\otimes n}\\
    =&\int_{\R^n}\sum_{k=1}^n\left[\left(\pder[\Phi_r]{x_k}\Psi+\Phi_r\pder[\Psi]{x_k}\right)\prod_{i<j}|x_i-x_j|^\beta\right] \pder[\xi]{x_k}dx^{\otimes n}\\
    \overset{(\ast\ast)}{=}&-\int_{\R^n}\sum_{k=1}^n\Bigg[\sum_{p\neq k}\frac{\beta}{x_k-x_p}\left(\pder[\Phi_r]{x_k}\Psi+\Phi_r\pder[\Psi]{x_k}\right)\\
    &\qquad\qquad\qquad +\left(\ppder[\Phi_r]{x_k}\Psi+2\pder[\Phi_r]{x_k}\pder[\Psi]{x_k}+\Phi_r\ppder[\Psi]{x_k}\right)\Bigg]\prod_{i<j}|x_i-x_j|^\beta\xi dx^{\otimes n}
\end{align*}
Again we used the integration by parts on the unweighted Lebesgue measure at $(\ast\ast)$.
As $\Delta\phi_r$ defined as above is bounded and compactly supported, which will be seen just below, it is in particular in $L^2(\R^n, w_\beta)$. Thus, we have $\phi_r\in \Dcal(\Delta)$.
In view of \eqref{eq:zeroderivative-1} and \eqref{eq:zeroderivative-2},
we obtain
\begin{equation}\label{e:EP}
\begin{aligned} 
    \Delta\phi_r=&\left[\ppder[\Phi_r]{x_1}+\ppder[\Phi_r]{x_2}+\frac{\beta}{x_1-x_2}\cdot \left(\pder[\Phi_r]{x_1}-\pder[\Phi_r]{x_2}\right)\right]\Psi \\    
    &\quad +\left[\sum_{p\neq 1}\frac{\beta}{x_1-x_p}\pder[\Phi_r]{x_1}+\sum_{q\neq 2}\frac{\beta}{x_2-x_q}\pder[\Phi_r]{x_2}\right]\Psi\\
    &\qquad+\Phi_r\left[\sum_{(k,p)\in J_{12}}\frac{\beta}{x_k-x_p}\cdot\pder[\Psi]{x_k}+\sum_{k=1}^n\ppder[\Psi]{x_k}\right]\comma
\end{aligned}
\end{equation}
where $J_{12}=\{(k,p)\in \{1,2,\dots,n\}^2\mid (k,p)\neq (1,2), (2,1)\}$.
To check that $\Delta \phi_r$ is bounded,
we only need to care about the first term (denoted by $(I)$) in \eqref{e:EP}.
It can be explicitly written around $\{x_1=x_2\}$ as
\[(I)=\left[-\frac{18\pi^2}{r^2}\cos\left(\frac{3\pi}{r}(x_1-x_2)\right)-\frac{6\pi\beta}{r(x_1-x_2)}\cdot \sin\left(\frac{3\pi}{r}(x_1-x_2)\right)\right]\Psi.\]
Thus $\Delta \phi_r$ is bounded around the singular hyperplane $\{x_1=x_2\}$,
and also on the whole $\R^n$.

\medskip
{\it Step 8. Failure of \ref{wB}.}
In this step, we will see that the above functions $u_r,\phi_r$ disprove the weak Bochner inequality \ref{wB} for sufficiently small $r$. 
Due to~$\supp(\phi_r)\subset D_r^1$,
the expression of $|\nabla u_r|^2$ in \eqref{eq:nablausquare-1} is simplified as
\[
    |\nabla u_r|^2=(1-\beta)^2\sum_{k=1}^n\left[\sum_{p\neq k}\frac{1}{x_k-x_p}\right]^2\prod_{i<j}|x_i-x_j|^{2-2\beta}.
\]
 Since $|x_i-x_j|$ is bounded below uniformly in $r$, the term~$\frac{1}{x_i-x_j}$ is bounded uniformly in $r$ for $(i,j)\in I_{12}$ (recall $I_{12}:=\{(i,j)\in\N^2\mid 1\leq i<j\leq n,~(i,j)\neq (1,2)\}$. Recalling $\frac{1}{|x_1-x_2|}=\frac{1}{2|t|}\geq \frac{1}{2r}$, 
we can find $C_1(r)>1$ and a function $U_1:S^1\to [0,+\infty)$, which is independent of $t$ and $r$, such that
\begin{equation}\label{eq:approx1}
    C_1(r)^{-1}U_1(h)|t|^{-2\beta}\leq |\nabla u_r|^2(t,h)\leq C_1(r)U_1(h)|t|^{-2\beta} ~~\text{for}~~ (t,h)\in D_r^1
\end{equation}
and
\begin{equation}\label{converge1-1}
    C_1(r)\to 1~~\text{as}~~r\to 0.
\end{equation}
By a similar argument,
there are $C_2(r)>1$ and $U_2:S^1\to [0,+\infty)$ such that
\begin{equation}\label{eq:approx2}
    C_2(r)^{-1}U_2(h)|t|^\beta\leq w_\beta(t,h)\leq C_2(r)U_2(h)|t|^\beta~~\text{for}~~ (t,h)\in D_r^1
\end{equation}
and
\begin{equation}\label{converge1-2}
    C_2(r)\to 1~~\text{as}~~r\to 0.
\end{equation}
The expression $\Delta \phi_r$ in \eqref{e:EP} is explicitly written by
\begin{align*} \label{e:EP2}
    \Delta\phi_r(t,h)=&\left[-\frac{18\pi^2}{r^2}\cos\left(\frac{3\pi}{2r}t\right)-\frac{2\beta}{t}\cdot \frac{6\pi}{r}\sin\left(\frac{3\pi}{2r}t\right)\right]\Psi(h) \\ 
    &\quad +\frac{3\pi}{r}\left[-\sum_{p\neq 1}\frac{\beta}{x_1-x_p}\sin\left(\frac{3\pi}{2r}t\right)+\sum_{q\neq 2}\frac{\beta}{x_2-x_q}\sin\left(\frac{3\pi}{2r}t\right)\right]\Psi(h)\\
    &\qquad+\left(1+\cos\left(\frac{3\pi}{2r}t\right)\right)\left[\sum_{(k,p)\in J_{12}}\frac{\beta}{x_k-x_p}\cdot\pder[\Psi]{x_k}(h)+\sum_{k=1}^n\ppder[\Psi]{x_k}(h)\right].\end{align*}
    Since $\Psi$ is a compactly supported smooth function, we can find a positive constant $A_1$, which is independent of $t$ and $r$, such that
    \begin{equation}\label{eq:approx3}
        \Delta\phi_r(t,h)\leq -\left[\frac{18\pi^2}{r^2}\cos\left(\frac{3\pi}{2r}t\right)+\frac{12\pi\beta}{rt}\sin\left(\frac{3\pi}{2r}t\right)\right]\Psi(h)+\frac{A_1}{r}~~\text{for}~~(t,h)\in D_r^1.
    \end{equation}
Let us define the 
subset $I\subset [-\frac{2r}{3},\frac{2r}{3}]$ by
\[t\in I~~\iff~~\frac{18\pi^2}{r^2}\cos\left(\frac{3\pi}{2r}t\right)+\frac{12\pi\beta}{rt}\sin\left(\frac{3\pi}{2r}t\right)\geq 0,\]
and denote by $I^c=[-\frac{2r}{3},\frac{2r}{3}]\setminus I$ its complement.
By
using \eqref{eq:approx1}, \eqref{eq:approx2} and \eqref{eq:approx3},
we can compute the left hand side of \ref{wB} as
\begin{align*}
    &\frac12\int_{\R^n}|\nabla u|^2\Delta \phi_r w_\beta dx^{\otimes n}\\
    ={}&\frac12\int_{D_r^1}|\nabla u|^2\Delta \phi_r w_\beta dtdh\\
   \leq  {}& -\frac{1}{C_1(r)C_2(r)}\int_I\int_{S^1}\left[\frac{9\pi^2}{r^2}\cos\left(\frac{3\pi}{2r}t\right)+\frac{6\pi\beta}{rt}\sin\left(\frac{3\pi}{2r}t\right)\right]\Psi U_1U_2|t|^{-\beta}dtdh\\
   & -C_1(r)C_2(r)\int_{I^c}\int_{S^1}\left[\frac{9\pi^2}{r^2}\cos\left(\frac{3\pi}{2r}t\right)+\frac{6\pi\beta}{rt}\sin\left(\frac{3\pi}{2r}t\right)\right]\Psi U_1U_2|t|^{-\beta}dtdh\\
   &+C_1(r)C_2(r)\int_{-\frac{2r}{3}}^{\frac{2r}{3}}\int_{S^1}\frac{A_1}{r}U_1U_2|t|^{-\beta}dtdh.
   \end{align*}
   Furthermore, by changing the variable $s=\frac{3\pi}{2r}t$,
   we can continue as
   \begin{equation}\label{eq:3}
   \begin{aligned}
   =  {}& -r^{-1-\beta}\cdot \frac{A_2}{C_1(r)C_2(r)}\int_{\frac{3\pi}{2r}I}\left[\cos(s)+\beta\frac{\sin(s)}{s}\right]|s|^{-\beta}ds\\
   &-r^{-1-\beta}\cdot C_1(r)C_2(r)A_2\int_{\frac{3\pi}{2r}I^c}\left[\cos(s)+\beta\frac{\sin(s)}{s}\right]|s|^{-\beta}ds\\
   & +r^{-\beta}\cdot C_1(r)C_2(r)A_1\left(\frac{2}{3\pi}\right)^{1-\beta}\int_{S^1}U_1U_2dh\int_{-\pi}^{\pi}|s|^{-\beta}ds,
   \end{aligned}
   \end{equation}
   where $\frac{3\pi}{2r}I$ and $\frac{3\pi}{2r}I^c$ are naturally scaled domains and $A_2$ is the explicit constant given by
   \[A_2:=9\pi^2\left(\frac{2}{3\pi}\right)^{1-\beta}\int_{H^1}\Psi(h) U_1(h)U_2(h)dh.\]
   Finally, by the asymptotics \eqref{converge1-1} and \eqref{converge1-2},
   we obtain the asymptotic estimate of the three lines \eqref{eq:3} as
   \begin{align*}
    \eqref{eq:3}\leq & -r^{-1-\beta}\cdot A_2\int_{-\pi}^{\pi}\left[\cos(s)+\beta\frac{\sin(s)}{s}\right]|s|^{-\beta}ds+o(r^{-1-\beta})~~~~(r\to 0).
\end{align*}
The constant of the leading term is negative, which can be readily seen by the symmetry of trigonometric functions and the monotonicity of $|s|^{-\beta}$. Due to $0<\beta<1$, the leading term (i.e., the LHS of \ref{wB}) goes to $-\infty$ as $r \to 0$.

Regarding the RHS of \ref{wB},
for $K\leq 0$, we can compute as,
\begin{align*}
&\int_{\R^n}\left[\langle\nabla\Delta u_r,\nabla u_r\rangle + K|\nabla u_r|^2\right]\phi_r w_\beta dx^{\otimes n}\\
\overset{(\star)}{=} {} & \int_{D_r^1} K|\nabla u_r|^2\phi_r w_\beta dx^{\otimes n}\\
\geq{} & K\int_{-\frac{2r}{3}}^{\frac{2r}{3}}\int_{S^1}C_1U_1|t|^{-2\beta}\cdot 2\cdot C_2U_2|t|^{\beta}dtdh\\
={} & r^{1-\beta}\cdot 2C_1C_2K\int_{S^1}U_1U_2dh\left(\frac{2}{3\pi}\right)^{1-\beta}\int_{-\pi}^\pi|s|^{-\beta}ds\\
\to{} & 0~~~~(r\to 0).
\end{align*}
Here the equality $(\star)$ holds since $\Delta u_r\equiv 0$ on $D_r^1$.
Therefore, as $r$ tends to $0$, the RHS converges to $0$.
Thus, for any $K\in \R$,
we can take a small $r$ so that the function $u_r,\phi_r$ does not support \ref{wB}.
\end{proof}
 \begin{remark}\label{rmk:betoql}
The function $f$ constructed in \eqref{eq:harmonic} is a locally integrable harmonic function. If the Dyson space supports the local weak Poincar\'e inequality,  \cite[Thm.\ 1.1]{Jia14} would provide another way to disprove $\BE(0,\infty)$. 
\end{remark}

\begin{remark}\label{rmk:tamed}
By a similar proof, we can also disprove $\BE(\kappa, \infty)$ with a distributional lower bound $\kappa$ in the extended Kato class $\mathcal{K}_{-1}$ in the sense of \cite{ErbRigStuTam22}. Indeed, 
by \eqref{eq:nablausquare-1},
the function $u_r$ is in $\Dcal(\Delta)$ while $|\nabla u_r|$ is not in $H^{1,2}(\R^n,w_\beta)$.
Hence, the conclusion of \cite[Prop.\ 6.10]{ErbRigStuTam22} does not hold.
\end{remark}

\begin{remark}\label{rmk:boundary}
        In \cite{Suz23},
        the first author proved that the Dyson Brownian motion on the configuration space $(\R^n/\mathfrak{S}_n,g,w_\beta)$ satisfies $\BE(0,\infty)$.
        Our argument cannot be applied to the configuration space.
        Indeed, in the configuration space, we have to use Stokes' Theorem {\it with a boundary term} in the integration by parts~\eqref{eq:integrationbyparts}
        because the gradient $\nabla u_r$ does not necessarily vanish along the direction of normal vectors in $\Scal$.
        This prevents $u_r$ from lying in $\Dcal(\Delta)\subset H^{1,2}(\R^n/\mathfrak{S}_n,w_\beta)$.
        
        
\end{remark}

\bibliographystyle{alpha}
\bibliography{MasterBib.bib}

\end{document}